\theoremstyle{plain} %text of this environment is typesetted in italics
\newtheorem{theorem}{\indent\bf Theorem}[section]
\newtheorem{lemma}[theorem]{\indent\bf Lemma}
\newtheorem{proposition}[theorem]{\indent\bf Proposition}
\theoremstyle{definition} %text of this environment is typesetted in roman letters
\newtheorem{remark}[theorem]{\indent\bf Remark}
\newcommand{\dbar}{\bar{\partial}}
\begin{document}
	
	\title[]{A simplified proof of optimal $L^2$-extension theorem and extensions from non-reduced subvarieties}
	%title of paper and the running head option
	
	\author[G. Hosono]{Genki Hosono} %first author's name and the running head option
	
	%%%%%%%%%%%%%%% footnote %%%%%%%%%%%%%%%%
	\subjclass[2010]{ %2010 MSC numbers
		32A10, 14F18.
	}
	%In case \subjclass[2010] command is not effective
	%(or the version of amsart.cls is old), write as follows instead:
	%\renewcommand{\thefootnote}{\fnsymbol{footnote}}
	%\footnote[0]{2010\textit{ Mathematics Subject Classification}.
	%Primary 00; Secondary 00.}
	%
	\keywords{ %key words and phrases
	Ohsawa-Takegoshi $L^2$-extension theorem, jumping numbers, multiplier ideals.
    }
	%\thanks{ %acknowledgment of support etc. if any
	%$^{*}$Thanks.
	%}
	%%%%%%%%%%%% Authors' addresses %%%%%%%%%%%%%
	\address{% First Author
		Mathematical Institute, Tohoku University, 6-3, Aramaki Aza-Aoba, Aoba-ku, Sendai, 980-8578, Japan
	}
	\email{genki.hosono.a4@tohoku.ac.jp, genki.hosono@gmail.com}
	%%%%%%%%%%%%%%%%%%%%%%%%%%%%%%%%%%%%%%%%%
	\maketitle
\begin{abstract}
We give a simplified proof of an optimal version of the Ohsawa-Takegoshi $L^2$-extension theorem. We follow the variational proof by Berndtsson-Lempert and use the method in the paper of McNeal-Varolin. As an application, we give an optimal estimate for extensions from possibly non-reduced subvarieties.
\end{abstract}
	
\section{Introduction}
The goals of this paper are to provide a simplified proof of an optimal version of the Ohsawa-Takegoshi $L^2$-extension theorem and to apply the same method to extensions from possibly non-reduced subvarieties to get an optimal estimate.

\textit{The Ohsawa-Takegoshi $L^2$-extension theorem} \cite{OT} is an extension theorem for holomorphic functions defined on a closed subvariety of a complex manifold with an $L^2$-estimate (For precise statement, see Theorem \ref{thm:main} below for example). This theorem is now widely used in complex and algebraic geometry.  
Recently, {\it an optimal $L^2$-extension theorem} was obtained by B{\l}ocki \cite{Blo} and Guan and Zhou \cite{GZ}. Their proofs are based on a version of H\"ormander's $L^2$-estimate for $\dbar$-equation and a careful choice of auxiliary functions. After that Berndtsson and Lempert obtained a new proof for the optimal $L^2$-extension theorem \cite{BL} based on a variational result of Berndtsson \cite{Ber}.

In this paper, we will give a simplified proof of the $L^2$-extension theorem along with the proof in \cite{BL}.
We simplified a limiting argument in their proof (in particular, Lemma 3.5 in \cite{BL}) by using a method by McNeal and Varolin \cite{MV}.
In \cite{MV}, the limiting argument is replaced by an existence result of an extension to a small neighborhood of the subvariety with a certain $L^2$-estimate (Theorem 4.5 in \cite{MV}). 
Since they considered the jet extension, in \cite{MV}, $\dbar$-equation was solved to get an extension of jets to a small neighborhood. In our setting, it is enough to consider the restriction of a fixed arbitrary extension.

Moreover, by the same method, we will give an optimal $L^2$-estimate for extensions from non-reduced subvarieties in a formulation similar to \cite{Dem}. 
Let $\Omega \Subset \mathbb{C}^n$ be a bounded pseudoconvex domain and $\phi \in PSH(\Omega) \cap C(\Omega)$ be a continuous plurisubharmonic function.
Let $\psi$ be a negative plurisubharmonic function with neat analytic singularities (see Section 3 for definitions).
Let $0=m_0<m_1<\cdots < m_p <\cdots$ be the jumping numbers of $\psi$. Then we have:
\begin{theorem}\label{thm:non-reduced}
	Let $f \in H^0(\Omega, \mathcal{I}(m_{p-1}\psi) /\mathcal{I}(m_p\psi))$. Assume that there exists a holomorphic function $F^\circ$ on $\Omega$ such that $F^\circ|_{\mathcal{O}/\mathcal{I}(m_p \psi)} = f$ and $\int_\Omega |F^\circ|^2 e^{-\phi - m_{p-1}\psi} < +\infty$. Then the $L^2$-minimal extension $F_0 \in A^2(\Omega, \phi + m_{p-1} \psi)$ of $f$ satisfies
	$$\int_\Omega |F_0|^2 e^{-\phi - m_{p-1} \psi} \leq \limsup_{t \to -\infty} e^{-(m_p - m_{p-1})t} \int_{\psi<t} |F^\circ|^2 e^{-\phi - m_{p-1} \psi}. $$
\end{theorem}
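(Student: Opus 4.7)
My plan is to transplant the variational argument of Berndtsson--Lempert \cite{BL}, as simplified in the preceding section following \cite{MV}, to the non-reduced setting, with the jumping-number gap $\alpha := m_p - m_{p-1} > 0$ playing the role of the codimension-one slope. For $t<0$ I introduce the plurisubharmonic family of weights
$$
\phi_t := \phi + m_{p-1}\psi + \alpha\,\max(\psi - t, 0),
$$
so that $\phi_0 = \phi + m_{p-1}\psi$, while $\phi_t = \phi + m_{p-1}\psi$ on $\{\psi<t\}$ and $\phi_t = \phi + m_p\psi - \alpha t$ on $\{\psi\ge t\}$. For each $t \le 0$ let $F_t$ be the $L^2(\Omega,\phi_t)$-minimal holomorphic function with $F_t \bmod \mathcal{I}(m_p\psi) = f$; the admissibility of $F^\circ$ ensures that $F_t$ exists, and $F_0$ recovers the minimizer from the statement.

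The central step is to establish a monotonicity for the rescaled quantity
$$
h(t) := e^{-\alpha t}\int_\Omega |F_t|^2 e^{-\phi_t},
$$
namely that $h$ is non-increasing in $t$ on $(-\infty, 0]$. This is the non-reduced analog of \cite[Lemma 3.5]{BL}, proved by applying Berndtsson's plurisubharmonic variation of Bergman kernels \cite{Ber} to the tautological family parametrized by $t$; the factor $e^{-\alpha t}$ compensates for the linear term $-\alpha t$ appearing in $\phi_t$ on $\{\psi \ge t\}$. In particular, $\int_\Omega |F_0|^2 e^{-\phi - m_{p-1}\psi} = h(0) \le \liminf_{t\to -\infty} h(t)$.

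It remains to bound $\liminf_{t\to -\infty}h(t)$ by the $\limsup$ in the statement. This is where the McNeal--Varolin simplification \cite{MV} enters: by minimality of $F_t$, $h(t) \le e^{-\alpha t}\int_\Omega |F^\circ|^2 e^{-\phi_t}$, and splitting the integral gives
$$
e^{-\alpha t}\int_\Omega |F^\circ|^2 e^{-\phi_t} = e^{-\alpha t}\int_{\psi<t}|F^\circ|^2 e^{-\phi - m_{p-1}\psi} + \int_{\psi\ge t}|F^\circ|^2 e^{-\phi - m_p\psi}.
$$
The first term is precisely the quantity appearing in the $\limsup$ of the theorem. In \cite{MV} the analogous second term is handled by constructing an explicit near-subvariety extension via a $\dbar$-equation; in our setting, since $F^\circ$ is a fixed global extension, the restriction $F^\circ|_{\{\psi<t\}}$ itself plays that role, and a short additional argument---localizing the minimization problem to $\{\psi<t\}$, or alternatively verifying that the $\{\psi\ge t\}$ contribution is negligible along a suitable subsequence as $t\to-\infty$---closes the loop.

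The principal obstacle is the monotonicity of $h$. The Berndtsson--Lempert scheme is naturally formulated for extensions from reduced hypersurfaces, whereas here we are extending a section of the non-reduced quotient $\mathcal{I}(m_{p-1}\psi)/\mathcal{I}(m_p\psi)$ and must verify that Berndtsson's positivity produces the correct monotonicity with slope $\alpha$. The neat analytic singularities hypothesis enters crucially: via a log resolution one reduces locally to the model $\psi = c\log|g|^2$, where $\mathcal{I}(m_{p-1}\psi)/\mathcal{I}(m_p\psi)$ becomes a vector bundle supported on $\{g=0\}$ and the classical form of \cite{Ber} applies directly on the strata of the jumping filtration.
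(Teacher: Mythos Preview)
Your overall strategy is the paper's, but your choice of the single weight family $\phi_t = \phi + m_{p-1}\psi + \alpha\max(\psi - t,0)$ with $\alpha = m_p - m_{p-1}$ breaks the final step. After splitting,
\[
e^{-\alpha t}\int_\Omega |F^\circ|^2 e^{-\phi_t}
= e^{-\alpha t}\int_{\psi<t}|F^\circ|^2 e^{-\phi - m_{p-1}\psi}
+ \int_{\psi\ge t}|F^\circ|^2 e^{-\phi - m_p\psi},
\]
the second term is \emph{not} negligible: as $t\to -\infty$ the set $\{\psi\ge t\}$ exhausts $\Omega\setminus\{\psi=-\infty\}$, and since $F^\circ$ represents a nonzero class in $\mathcal{I}(m_{p-1}\psi)/\mathcal{I}(m_p\psi)$ one has $\int_\Omega|F^\circ|^2e^{-\phi-m_p\psi}=+\infty$. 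Already in the model case $\Omega=\Delta$, $\psi=\log|z|^2$, $\phi=0$, $m_{p-1}=0$, $m_p=1$, $F^\circ\equiv 1$ the second term equals $-\pi t\to +\infty$. So neither of your proposed ``short additional arguments'' can work: the contribution on $\{\psi\ge t\}$ diverges, and localizing the minimization problem to $\{\psi<t\}$ amounts to a variation of \emph{domains}, to which Berndtsson's theorem does not apply directly.

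The paper resolves this by introducing a \emph{second} parameter: set $\phi_{s,q}=\phi + q\max(\psi - s,0)$ with $q>0$ free. For each fixed $q$, convexity of $s\mapsto\log\|\xi_g\|^2_{(A^2(\Omega,\phi_{s,q}+m_{p-1}\psi))^*}$ (Berndtsson) together with boundedness of $(m_p-m_{p-1})s+\log\|\xi_g\|^2$ as $s\to -\infty$ yields that $e^{-(m_p-m_{p-1})s}\|F_{s,q}\|^2$ is decreasing in $s$; hence $\|F_0\|^2\le e^{-(m_p-m_{p-1})s}\|F^\circ\|^2_{A^2(\Omega,\phi_{s,q}+m_{p-1}\psi)}$. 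Now let $q\to+\infty$ for fixed $s$: by dominated convergence the right-hand side converges to $e^{-(m_p-m_{p-1})s}\int_{\psi<s}|F^\circ|^2e^{-\phi-m_{p-1}\psi}$, with no residual term on $\{\psi\ge s\}$. Finally let $s\to -\infty$.

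A second point you gloss over: the monotonicity of $h(t)$ does not follow from Berndtsson's positivity alone. Positivity gives convexity in $t$; to upgrade to monotonicity one needs the a priori bound $e^{(m_p-m_{p-1})s}\|\xi_g\|^2_{s,q}=O(1)$ as $s\to -\infty$. In the paper this is Proposition~\ref{prop:properties_of_xi_g}(3), and it is where the neat-analytic-singularities hypothesis and the log resolution actually enter: one passes to the normal-crossing model and computes the relevant integrals in the transverse variable $w_k$. Your last paragraph gestures at this but does not supply the estimate; without it the argument is incomplete.
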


Note that the right-hand side is slightly different from the {\it $m_p$-jet $L^2$-norm} $|J^{m_p} f|^2_{\omega,h} dV_{Z^\circ_p, \omega} [\psi]$, which was used in \cite{Dem}. The relationship of these norms for jets are to be studied.

Very recently, Zhou and Zhu \cite{ZZ} proved an optimal extension theorem in a more general setting. Their proof is based on a version of $L^2$-estimates for $\dbar$-equations. It may be interesting to know if variational methods can be applied to this general setting.
	
\section{Simplest case: a simplification of Berndtsson-Lempert's proof}
To make the idea clear, first we will explain the simplest case. In this case we obtain a simplified proof of an optimal $L^2$-extention theorem. The following version is in \cite{BL}:

\begin{theorem}[Optimal $L^2$ extension, \cite{Blo}, \cite{GZ}, \cite{BL}]\label{thm:main}
	Let $\Omega \subset \mathbb{C}^n$ be a bounded pseudoconvex domain and $V \subset \Omega$ be a closed submanifold. Let $\phi \in PSH(\Omega)$. Assume that there exists a \textit{Green-type function $G$ on $\Omega$ with poles along $V$}, i.e.\ $G \in PSH(\Omega)$, $G <0$ on $\Omega$, and for some continuous functions $A$ and $B$ on $\Omega$, 
	$$ \log d^2(z, V) + A(z) \geq  G(z) \geq \log d^2(z, V) - B(z). $$
	Then, for every holomorphic funcion $f$ on $V$ with $\int_V |f|^2 e^{-\phi + kB} < +\infty$, there exists a holomorphic function $F$ on $\Omega$ such that $F|_V = f$ and
	$$\int_\Omega |F|^2e^{-\phi} \leq \sigma_k \int_V |f|^2 e^{-\phi + kB}.$$
\end{theorem}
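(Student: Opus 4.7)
The plan is to follow the variational approach of Berndtsson--Lempert \cite{BL}: embed $f$ in a one-parameter family of weighted $L^2$-extension problems, use Berndtsson's plurisubharmonicity theorem \cite{Ber} to obtain a convexity statement in the parameter, and apply the McNeal--Varolin idea to replace the delicate limiting argument (Lemma 3.5 of \cite{BL}) by restriction of a single fixed holomorphic extension of $f$.

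First, I would set up a family of weights $\phi_t$ parametrized by $t \in (-\infty, 0]$ and built from the Green-type function $G$, so that $\phi_0$ recovers the weight appearing on the left-hand side of the target estimate while as $t \to -\infty$ the space $A^2(\Omega,\phi_t)$ becomes sensitive only to the behaviour of holomorphic functions near $V$; a natural choice is $\phi_t := \phi + k \max(G,t)$, so that on $\Omega_t := \{G<t\}$ the weight reads $\phi + kt$ while outside $\Omega_t$ it reads $\phi + kG$. Let $F_t$ denote the $L^2$-minimal extension of $f$ in $A^2(\Omega,\phi_t)$ and set $M(t) := \int_\Omega |F_t|^2 e^{-\phi_t}$. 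Berndtsson's theorem applied to this family yields the convexity of a logarithmic quantity (such as $\log M(t)$, or equivalently a diagonal Bergman kernel along the fibre over $V$) in $t$, which in turn implies that a suitable rescaling, essentially $e^{-kt} M(t)$, is monotone.

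Second --- the point at which the McNeal--Varolin simplification enters --- I would fix once and for all an arbitrary holomorphic extension $\tilde F$ of $f$ to $\Omega$; such an $\tilde F$ exists because $V$ is a closed submanifold of the pseudoconvex $\Omega$, and crucially no $L^2$-control on $\tilde F$ is required. For $t \ll 0$, using $\tilde F$ as a competitor yields $M(t) \leq \int_\Omega |\tilde F|^2 e^{-\phi_t}$; combining the sandwich bound $\log d^2(z,V) - B(z) \leq G(z) \leq \log d^2(z,V) + A(z)$, the coarea formula in directions normal to $V$, and the continuity of $\phi$, one extracts the asymptotic
\[
\limsup_{t \to -\infty} e^{-kt} \int_\Omega |\tilde F|^2 e^{-\phi_t} \leq \sigma_k \int_V |f|^2 e^{-\phi + kB}.
\]

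Combining the monotonicity of $e^{-kt} M(t)$ with this asymptotic upper bound then gives $M(0) = \int_\Omega |F_0|^2 e^{-\phi} \leq \sigma_k \int_V |f|^2 e^{-\phi + kB}$, so $F := F_0$ is the desired extension. The step I expect to be hardest is the asymptotic estimate: one must track precisely how the fixed extension $\tilde F$, integrated against the ever more concentrated weight $e^{-\phi_t}$, picks up the factor $\sigma_k$ from the unit-ball volume in the normal directions and the factor $e^{kB}$ from the lower bound on $G$, while controlling the contribution from outside the shrinking tube $\Omega_t$. In \cite{BL} this is the role of Lemma 3.5 and requires a careful limiting argument; here, as in \cite{MV}, the limit is bypassed because a single pre-existing extension can simply be restricted, with no need to solve a $\dbar$-equation near $V$.
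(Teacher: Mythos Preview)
Your overall architecture matches the paper's: fix an arbitrary extension $F^\circ$ of $f$, express the minimal norm via duality, use Berndtsson's theorem to obtain convexity of $t\mapsto\log\|\xi\|^2_{t,*}$ and hence monotonicity of the rescaled minimal norm, and finally bound $\|F_{t}\|^2$ by $\|F^\circ\|^2$ and pass to the limit $t\to-\infty$ using \cite[Lemma~3.3]{BL}. The McNeal--Varolin simplification you describe (restricting a pre-existing extension rather than solving a $\dbar$-problem near $V$) is exactly the point of the paper.

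There is, however, a genuine gap in your choice of weight family. With the one-parameter family $\phi_t=\phi+k\max(G,t)$ you cannot control the contribution from $\Omega\setminus\Omega_t$: on that set $e^{-\phi_t}=e^{-\phi-kG}$, and since $e^{-kG}\sim d(z,V)^{-2k}$ this is borderline non-integrable near $V$. Thus
\[
\int_{\Omega\setminus\Omega_t}|F^\circ|^2 e^{-\phi_t}=\int_{\Omega\setminus\Omega_t}|F^\circ|^2 e^{-\phi-kG}\ \uparrow\ \int_{\Omega}|F^\circ|^2 e^{-\phi-kG}=+\infty
\]
as $t\to-\infty$ (unless $F^\circ$ vanishes on $V$, which it cannot). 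So the asymptotic inequality you write down fails no matter which rescaling ($M(t)$ or $e^{-kt}M(t)$) one takes. This is precisely the difficulty you flag as ``hardest''; it is not merely hard but impossible with this weight.

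The paper's fix is to introduce a \emph{second} parameter $p>0$ and work with $\phi_{t,p}(z):=\phi(z)+p\max(G(z)-t,0)$. For each fixed $p$, Berndtsson's theorem together with \cite[Lemma~3.2]{BL} gives that $\log\|\xi_g\|^2_{t,p}+kt$ is convex and bounded above as $t\to-\infty$, hence increasing; dually, $e^{-kt}\|F_{t,p}\|^2_{A^2_{t,p}}$ is decreasing in $t$. After bounding $\|F_{t,p}\|^2$ by $\|F^\circ\|^2_{A^2_{t,p}}$, one then lets $p\to+\infty$ \emph{first}: since $e^{-\phi_{t,p}}\downarrow \mathbf{1}_{\Omega_t}e^{-\phi}$, dominated convergence forces the integral to localize to $\Omega_t$, killing the problematic exterior contribution. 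Only after this localization does the limit $t\to-\infty$ recover $\sigma_k\int_V|f|^2 e^{-\phi+kB}$ via \cite[Lemma~3.3]{BL}. The two-parameter scheme and the order of limits ($p\to\infty$ then $t\to-\infty$) are essential, and are the missing ingredient in your outline.
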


We will prove Theorem \ref{thm:main} following the proof in \cite{BL}. The most important result used in the proof is the following:

\begin{theorem}[{\cite{Ber}}]\label{thm:variation}
	Let $\Omega \subset \mathbb{C}^n$ be a bounded pseudoconvex domain and $\Phi \in PSH \cap C^{\infty}(\overline{\Omega \times \Delta })$ where $\Delta = \{|t| < 1 \} \subset \mathbb{C}$ is the unit disc. For each $t \in \Delta$, we let $\phi_t(z) := \Phi(z,t)$, which is a plurisubharmonic function on $\Omega$.
	Let $\xi \in A^2(\Omega)$ be a bounded linear functional on the Hilbert space of $L^2$ holomorphic functions on $\Omega$ (note that $A^2(\Omega, \phi_t) = A^2(\Omega)$ as vector spaces since by assumption $\Phi$ is bounded). Then the function
	$$t \mapsto \log \|\xi\|_{(A^2_{\Omega, \phi_t})^*} $$
	is subharmonic.
\end{theorem}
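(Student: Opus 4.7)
The plan is to reduce Theorem \ref{thm:variation}, via a Riesz-representative / Cauchy--Schwarz trial-function argument, to a Pr\'ekopa-type superharmonicity statement for the weighted $L^2$-norm of a fixed holomorphic function; the latter would then be established by the standard Berndtsson curvature computation on the associated Hilbert bundle.

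For each $t \in \Delta$ let $F_t \in A^2(\Omega, \phi_t)$ be the Riesz representative of $\xi$, so that $\xi(G) = \int_\Omega G \overline{F_t}\, e^{-\phi_t}$ and $\xi(F_t) = \|F_t\|_{\phi_t}^2 = \|\xi\|_{(A^2(\Omega,\phi_t))^*}^2$. Fix $t_0 \in \Delta$ and use $F_{t_0}$, which is $t$-independent, as a trial function for every $t$. Cauchy--Schwarz in $A^2(\Omega, \phi_t)$ yields $|\xi(F_{t_0})|^2 \leq \|F_{t_0}\|_{\phi_t}^2 \cdot \|\xi\|_{(A^2(\Omega,\phi_t))^*}^2$, and because $\xi(F_{t_0}) = \|\xi\|_{(A^2(\Omega,\phi_{t_0}))^*}^2$ is independent of $t$, taking logarithms gives
\[
\log \|\xi\|_{(A^2(\Omega,\phi_t))^*}^2 \;\geq\; 2\log \|\xi\|_{(A^2(\Omega,\phi_{t_0}))^*}^2 \;-\; \log \|F_{t_0}\|_{\phi_t}^2,
\]
with equality at $t = t_0$. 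Hence if $t \mapsto \log \|F_{t_0}\|_{\phi_t}^2$ is superharmonic near $t_0$, the right-hand side is subharmonic there; combined with the equality at $t_0$, a mean-value comparison on small circles around $t_0$ shows that $u(t) := \log \|\xi\|_{(A^2(\Omega,\phi_t))^*}^2$ satisfies the sub-mean-value inequality at $t_0$. Since $t_0 \in \Delta$ is arbitrary and $u$ is continuous (by smoothness of $\Phi$), this gives subharmonicity of $u$ on all of $\Delta$.

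It therefore remains to prove the auxiliary claim: for every fixed $F \in A^2(\Omega)$, the function $t \mapsto \log \int_\Omega |F|^2 e^{-\Phi(z,t)}\, dz$ is superharmonic on $\Delta$ whenever $\Phi$ is PSH in $(z,t)$. This is the complex Pr\'ekopa / direct-image positivity statement of Berndtsson. I would prove it by viewing $\Delta \times A^2(\Omega) \to \Delta$ as a trivial Hilbert bundle equipped with the $t$-varying inner product $\langle G, H \rangle_t = \int_\Omega G \bar H\, e^{-\phi_t}$, representing the Chern connection of this bundle via minimal-$L^2$-norm extensions of sections (available from H\"ormander's weighted $L^2$-estimate on $\Omega$), and reading off Nakano-semipositivity of the Chern curvature from a Bochner--Kodaira identity whose positive right-hand side encodes the full $(z,t)$-plurisubharmonicity of $\Phi$. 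The fixed-$F$ superharmonicity is exactly the translation of this curvature positivity to the constant section $F$ of the bundle. This curvature calculation is the main obstacle: naive differentiation of $\log \int |F|^2 e^{-\Phi}$ only engages the pure $(t,\bar t)$-component of $i\partial\bar\partial\Phi$ and gives an indeterminate sign, while the mixed $(z,t)$-terms of the Hessian are essential and enter only through the $\bar\partial$-machinery underlying the Bochner--Kodaira identity.
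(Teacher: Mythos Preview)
The paper does not give its own proof of Theorem~\ref{thm:variation}; it is quoted from Berndtsson \cite{Ber} and used as a black box (the remark following the statement merely records that alternative proofs exist via \cite{GZ} and \cite{DWZZ}). So there is nothing in the paper to compare your argument against.

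That said, your proposal is sound. The Riesz-representative/Cauchy--Schwarz step is correct: with $u(t)=\log\|\xi\|^2_{*,t}$ and $v_{t_0}(t)=2\log\|\xi\|^2_{*,t_0}-\log\|F_{t_0}\|^2_{\phi_t}$ you have $u\geq v_{t_0}$ with equality at $t_0$, so subharmonicity of $v_{t_0}$ yields the sub-mean-value inequality for $u$ at $t_0$; continuity of $u$ (immediate from the smoothness and boundedness of $\Phi$) then gives subharmonicity everywhere. Your auxiliary claim, superharmonicity of $t\mapsto\log\int_\Omega|F|^2e^{-\Phi(\cdot,t)}$, is exactly Berndtsson's complex Pr\'ekopa theorem. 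One remark that shortens what you wrote: the general-$F$ case reduces to $F\equiv 1$, since $|F(z)|^2e^{-\Phi(z,t)}=e^{-(\Phi(z,t)-\log|F(z)|^2)}$ and $\Phi-\log|F|^2$ is still plurisubharmonic in $(z,t)$; so you do not need the full Hilbert-bundle curvature formalism for arbitrary sections, only the scalar Pr\'ekopa statement. The curvature/Bochner--Kodaira argument you sketch for that scalar statement is precisely Berndtsson's original method in \cite{Ber}, so in the end your route and the cited source coincide at the hard step; your contribution is the clean reduction from the dual-norm formulation to the Pr\'ekopa formulation via the touching family $\{v_{t_0}\}$.
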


\begin{remark}
	It is interesting that, conversely, Theorem \ref{thm:variation} can be proved by the optimal $L^2$-extension theorem \cite{GZ}.
	Furthermore, in \cite{DWZZ}, Theorem \ref{thm:variation} is obtained by the (non-optimal) $L^2$-extension theorem via an $L^2$-theoretic characterization of plurisubharmonic functions.
	According to these results, Theorem \ref{thm:variation} also holds for singular weights.
\end{remark}

\begin{proof}[Proof of Theorem \ref{thm:main}]
We will denote by $A^2(\Omega, \phi)$ the Hilbert space of holomorphic functions $F$ on $\Omega$ with $\int_\Omega |F|^2e^{-\phi}<+\infty$. We can assume that $\phi$ is continuous.
	
First take an arbitrary $L^2$ extension $F^\circ \in A^2(\Omega, \phi)$ of $f$ to $\Omega$ by shrinking domain in advance and using the standard theory of Stein manifolds. (Thus we should consider a sequence of domains $\Omega_j$ approximating $\Omega$ from inside. We will prove the existence of an extension $F_0^{(j)} \in \mathcal{O}(\Omega_j)$ of $f|_{\Omega_j \cap V}$ with uniformly bounded $L^2$-norms. Thus we can extract a convergent subsequence. In the argument below, just for simplicity, we omit the subscript $j$. We will need this approximation sequence of domains again in the last of this proof.)

Consider a short exact sequence of Hilbert spaces:
$$0 \longrightarrow A^2(\Omega, \phi) \cap \mathcal{I}_V \longrightarrow A^2(\Omega, \phi) \longrightarrow  \frac{A^2(\Omega, \phi)}{A^2(\Omega, \phi) \cap \mathcal{I}_V} \longrightarrow 0, $$
where $A^2(\Omega, \phi) \cap \mathcal{I}_V$ denotes the space $A^2(\Omega, \phi) \cap H^0(\Omega, \mathcal{I}_V)$.
Let $F_0$ be the $L^2$-minimum extensions of $f$. The $L^2$ norm $\|F_0\|_{A^2(\Omega, \phi)}$ of $F_0$ is equal to the quotient norm of $F^\circ$ in the space ${A^2(\Omega, \phi)}/{A^2(\Omega, \phi) \cap \mathcal{I}_V}$. Considering dual spaces, we can write the norm as
$$\|F^\circ\|_{A^2/A^2\cap \mathcal{I}_V} = \sup\left\{\frac{| \langle\xi, F^\circ \rangle |}{\|\xi\|_{A^2(\Omega, \phi)^*}}: \xi \in A^2(\Omega, \phi), \langle \xi, h\rangle= 0 \text{ for every }h \in A^2(\Omega, \phi) \cap \mathcal{I}_V \right\}. $$
Let us consider a good class of $\xi$. To do that, fix a smooth function $g$ on $V$ with compact support. Define a linear functional $\xi_g$ on $A^2(\Omega, \phi)$ by
$$\langle \xi_g, h \rangle := \sigma_k\int_V h \overline{g} e^{-\phi+kB}.$$
It can be shown that the set of such functionals $\xi_g$ is a dense subspace of $({A^2(\Omega, \phi)}/{A^2(\Omega, \phi) \cap \mathcal{I}_V})^*$. Thus the supremum above can be written also as
$$\sup_g\frac{| \langle\xi_g, F^\circ \rangle |}{\|\xi_g\|_{A^2(\Omega, \phi)^*}} . $$

For $p \geq 0$ and $t \in \mathbb{C}$ with ${\rm Re}\, t \leq 0$, we let $\phi_{t,p}(z) := \phi(z) + p \max(G(z)-{\rm Re}\, t,0)$. 
Define $A^2_{t,p}:= A^2(\Omega, \phi_{t,p})$. 
For fixed $p \geq 0$, the function $(t, z) \mapsto \phi_{t ,p}(z)$ is
plurisubharmonic. Applying Theorem \ref{thm:variation} to the family $\{A^2(\Omega, \phi_{t,p})\}_t$, we obtain that the function
$$t \mapsto \|\xi_g\|_{(A^2(\Omega, \phi_{t,p}))^*}$$
is subharmonic.
Since $\phi_{t,p}$ depends only on the real part of $t$, it is convex as a function of ${\rm Re}\, t$. From here we assume that $t \in \mathbb{R}_{\leq 0}$. The following lemma describes the limiting behavior of $\|\xi_g\|_{(A^2(\Omega, \phi_{t,p}))^*}$. We will write this norm by $\|\xi_g\|_{t,p}$ for short.

\begin{lemma}[{\cite[Lemma 3.2]{BL}}]
	For fixed $p>0$, it holds that
	$$\|\xi_g \|^2_{t,p} e^{kt} = O(1) $$
	when $t \to -\infty$. In particular, $\log \|\xi_g\|^2_{t,p} + kt$ is convex in $t$ and bounded from above when $t \to -\infty$, thus increasing in $t$.
\end{lemma}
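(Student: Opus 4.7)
The plan is to bound $\|\xi_g\|_{t,p}^{*2}\,e^{kt}$ directly by an $L^2$ trace estimate that exploits how $\phi_{t,p}$ looks on the sublevel set $\{G<t\}$. The crucial simplification is that on $\{G<t\}$ the truncation $\max(G-t,0)$ vanishes, so $\phi_{t,p}\equiv\phi$ there, and hence
$$\|h\|_{t,p}^{2}\;\geq\;\int_{\{G<t\}}|h|^{2}e^{-\phi}$$
for every $h\in A^{2}_{t,p}$. It therefore suffices to prove the codimension-$k$ trace estimate
$$\int_{V}|h|^{2}e^{-\phi+kB}\;\leq\;C\,e^{-kt}\int_{\{G<t\}}|h|^{2}e^{-\phi},$$
with $C$ independent of $t$ (but depending on the fixed data $\phi$, $A$, $B$, $V$ and the interior subdomain $\Omega_{j}$ from the exhaustion).

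To obtain this, I would use the Green-type upper bound $G(z)\leq\log d^{2}(z,V)+A(z)$, which guarantees that $\{G<t\}$ contains the variable tube $\{d^{2}(z,V)<e^{t-A(z)}\}$. Since $A$ is continuous on $\overline{\Omega}$ and $V$ is a closed submanifold, for $t\ll 0$ each $z_{0}\in V$ carries a normal polydisk $D_{z_{0}}\subset\{G<t\}$ of radius $r\sim e^{(t-A(z_{0}))/2}$ in the $k$ normal directions. Applying the holomorphic sub-mean-value inequality $|h(z_{0})|^{2}\leq(\pi r^{2})^{-k}\int_{D_{z_{0}}}|h|^{2}$, and integrating over $V$ against $e^{-\phi+kB}\,dV_{V}$ via the tubular-neighborhood Fubini (using the boundedness of $e^{-\phi+k(A+B)}$) produces the displayed inequality, with $C$ absorbing $\pi^{-k}$ and $\sup_{\overline{\Omega}}e^{k(A+B)}$.

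Combining the estimate with Cauchy--Schwarz on the definition of $\xi_{g}$,
$$|\langle\xi_{g},h\rangle|^{2}\;\leq\;\sigma_{k}^{2}\Big(\int_{V}|g|^{2}e^{-\phi+kB}\Big)\int_{V}|h|^{2}e^{-\phi+kB}\;\leq\;C'\,\|g\|^{2}\,e^{-kt}\,\|h\|_{t,p}^{2},$$
yields $\|\xi_{g}\|_{t,p}^{*2}\,e^{kt}=O(1)$ as $t\to-\infty$, which is the main claim. For the ``in particular'' part, Theorem~\ref{thm:variation} applied to the plurisubharmonic family $\{\phi_{t,p}\}_{t}$ (with $p$ fixed) shows that $\log\|\xi_{g}\|_{t,p}^{*2}$ is subharmonic; since it depends only on $\mathrm{Re}\,t$, it is convex in $\mathrm{Re}\,t$, and adding the linear term $kt$ preserves convexity. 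A convex function on $\mathbb{R}$ that is bounded above as $t\to-\infty$ must be nondecreasing, giving the monotonicity.

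The main obstacle I anticipate is the Fubini step: realizing $\{G<t\}$ as a (nearly disjoint) union of transverse polydisks of the right radius requires a careful tubular-neighborhood parametrization with Jacobian estimates that are \emph{uniform in $t$}, which is precisely the role of the continuity of $A$ and $B$ in the Green-type hypothesis. A minor, but genuine, secondary issue is ensuring that the polydisks $D_{z_{0}}$ remain inside the working subdomain; this is handled by carrying out the estimate inside each $\Omega_{j}$ of the exhaustion, exactly as set up in the outer proof of Theorem \ref{thm:main}.
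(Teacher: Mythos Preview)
Your approach is correct and is essentially the one used in \cite{BL} (and mirrored in the paper's own proof of the analogous Proposition~\ref{prop:properties_of_xi_g}\,(3) for the non-reduced case): apply Cauchy--Schwarz to $\langle\xi_g,h\rangle$, bound the resulting $\int_V|h|^2e^{-\phi+kB}$ by $Ce^{-kt}\int_{\{G<t\}}|h|^2e^{-\phi}$ via the sub-mean-value inequality on normal polydisks contained in $\{G<t\}$, and then use $\phi_{t,p}=\phi$ on $\{G<t\}$ to dominate by $\|h\|_{t,p}^2$. Your identification of the tubular-neighborhood Fubini step (with Jacobian uniformly close to $1$ as $t\to-\infty$, granted by the continuity of $A$) as the only genuine technical point is exactly right.
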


The proof is the same as one in \cite{BL} up to this point. From here we use the idea in \cite{MV}.
We will denote by $F_{t,p}$ the $L^2$-minimal extension of $f$ in $A^2_{t,p} := A^2(\Omega, \phi_{t,p})$. By the same reason as before, we have
$$
\|F_{t,p}\| = \sup_g\frac{| \langle\xi_g, F^\circ \rangle |}{\|\xi_g\|_{t,p}}.
$$
By the lemma before, we have that $e^{-kt} \|F_{t,p}\|^2_{A^2_{t,p}}$ is decreasing in $t$, and thus
$$\|F_0\|^2_{A^2(\Omega, \phi)} \leq e^{-kt} \| F_{t,p} \|^2_{A^2_{t,p}}.$$
Since $F_{t,p}$ is $L^2$-minimal, we have
$$e^{-kt} \| F_{t,p} \|^2_{A^2_{t,p}} \leq e^{-kt} \| F^\circ \|^2_{A^2_{t,p}}.$$
Fix $t<0$ and let $p \to +\infty$. Since $e^{-\phi_{t,p} } \downarrow 1_{\Omega_t} \cdot e^{-\phi}$, Lebesgue's dominated convergence theorem shows that
$$ e^{-kt} \| F^\circ \|^2_{A^2_{t,p}} \to e^{-kt} \|F^\circ\|^2_{A^2(\Omega_t, \phi|_{\Omega_t})}.$$
Next let $t \to -\infty$. Here we need an approximation sequence of domains $\Omega_j \Subset \Omega$ again. We have proved that the $L^2$-minimum extension $F_0^{(j)} \in A^2(\Omega_j, \phi)$ of $f$ satisfies 
$$\|F_0^{(j)}\|^2_{A^2(\Omega_j, \phi)} \leq e^{-kt} \|F^\circ\|^2_{A^2(\Omega_j \cap \{G <t \}, \phi)}.$$
By \cite[Lemma 3.3]{BL}, the limsup of the right-hand side when $t \to -\infty$ is bounded by 
$$\sigma_k \int_{V} |f|^2_{e^{-\phi+ kB}}.$$
Therefore the $L^2$-norms $\|F_0^{(j)}\|_{A^2(\Omega_j, \phi)}$ are uniformly bounded. After taking a subsequence we get a function $F_0^{(\infty)} \in A^2(\Omega, \phi)$ satisfying
$$\|F_0^{(\infty)} \|^2_{A^2(\Omega, \phi)} \leq \sigma_k \int_{V} |f|^2_{e^{-\phi+ kB}}. $$
\end{proof}

\section{General case}
In the same manner, we can prove an optimal $L^2$-estimate for extensions from non-reduced subvarieties in a formulation similar to \cite{Dem}.

The setting is as follows. 
Let $\Omega \Subset \mathbb{C}^n$ be a bounded pseudoconvex domain and $\phi \in PSH(\Omega) \cap C(\Omega)$ be a continuous plurisubharmonic function.
Let $\psi$ be a negative plurisubharmonic function on $\Omega$ with {\it neat analytic singularities}, i.e.\ for each point $x \in \Omega$, there exist a neighborhood $U$ of $x$, a positive number $c>0$, a finite number of holomorphic functions $g_1, \ldots, g_N \in \mathcal{O}(U)$, and a smooth function $u \in C^\infty(U)$ such that
$$\psi(z) = c \log(|g_1(z)|^2 + \cdots + |g_N(z)|^2) + u(z)$$
for $z \in U$. We will regard $\psi$ as a generalization of Green-type functions $G$ appeared in Section 2.

Let $0=m_0<m_1<\cdots < m_p <\cdots$ be the {\it jumping numbers} of $\psi$. By definition, it holds that
$$\mathcal{I}(m_{p-1} \psi) = \mathcal{I}(m\psi) \supsetneq \mathcal{I}(m_p \psi)$$
for every $m \in [m_{p-1}, m_p)$.
In this setting, we have the following theorem:
\begin{theorem}\label{thm:non-reduced}
	Let $f \in H^0(\Omega, \mathcal{I}(m_{p-1}\psi) /\mathcal{I}(m_p\psi))$. Assume that there exists a holomorphic function $F^\circ \in A^2(\Omega, \phi + m_{p-1} \psi)$ such that $F^\circ|_{\mathcal{O}/\mathcal{I}(m_p \psi)} = f$. Then the $L^2$-minimal extension $F_0 \in A^2(\Omega, \phi + m_{p-1} \psi)$ of $f$ satisfies
	$$\int_{\Omega} |F_0|^2e^{-\phi - m_{p-1} \psi} \leq \limsup_{t \to -\infty} e^{-(m_p - m_{p-1})t} \int_{\Omega_t}|F^\circ|^2e^{- \phi - m_{p-1} \psi}, $$
	where $\Omega_t := \{\psi < t \}$.
\end{theorem}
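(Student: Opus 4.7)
The plan is to adapt the variational argument of Section 2 mutatis mutandis, with $\psi$ playing the role of the Green-type function $G$ and the jumping gap $m_p - m_{p-1}$ playing the role of the codimension $k$. Since $\psi$ is plurisubharmonic with $\psi < 0$ and $\mathcal{I}(m_p\psi) \subsetneq \mathcal{I}(m_{p-1}\psi) = \mathcal{I}(m\psi)$ for $m \in [m_{p-1},m_p)$, the quotient sheaf $\mathcal{I}(m_{p-1}\psi)/\mathcal{I}(m_p\psi)$ takes over the role of $\mathcal{O}_V$, and the extension problem becomes that of realising the minimal-norm preimage of $f$ in the Hilbert space $A^2(\Omega, \phi + m_{p-1}\psi)$.

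The first step is to introduce the variational family of weights
$$\Phi_{t,q}(z) := \phi(z) + m_{p-1}\psi(z) + q\,\max\!\bigl(\psi(z) - \operatorname{Re} t,\, 0\bigr), \qquad \operatorname{Re} t \leq 0,\ q \geq 0,$$
and to set $A^2_{t,q} := A^2(\Omega, \Phi_{t,q})$. Because $(z,t) \mapsto \psi(z) - \operatorname{Re} t$ is plurisubharmonic, so is $\Phi_{t,q}$, and $\Phi_{0,q} = \phi + m_{p-1}\psi$ while $\Phi_{t,q} \geq \phi + m_{p-1}\psi$ in general, so the given $F^\circ$ belongs to every $A^2_{t,q}$. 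Let $F_{t,q}$ denote the element of $A^2_{t,q}$ of minimal norm whose image in $\mathcal{O}/\mathcal{I}(m_p\psi)$ is $f$. Duality expresses
$$\|F_{t,q}\|_{t,q} = \sup_\xi \frac{|\langle \xi, F^\circ\rangle|}{\|\xi\|^*_{t,q}},$$
with $\xi$ ranging over bounded functionals on $A^2_{t,q}$ annihilating $A^2_{t,q} \cap \mathcal{I}(m_p\psi)$; a dense class of such $\xi$ can be realised by pairings against smooth representatives of sections of $\mathcal{I}(m_{p-1}\psi)/\mathcal{I}(m_p\psi)$, in the spirit of \cite{Dem} and \cite{MV}. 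Theorem \ref{thm:variation}, valid for singular weights by the remark following its statement, then shows that $t \mapsto \log \|\xi\|^*_{t,q}$ is convex in $\operatorname{Re} t$ for each fixed $q$ and $\xi$.

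The main obstacle is the analog of \cite[Lemma 3.2]{BL}: for each fixed $q$ and each admissible $\xi$,
$$\|\xi\|^{*2}_{t,q}\,e^{(m_p - m_{p-1})t} = O(1) \qquad \text{as } t \to -\infty.$$
The intuition is that on $\{\psi > t\}$ the weight reads $e^{-\phi - (m_{p-1}+q)\psi + qt}$, and the defining property of the jumping numbers, namely that the integrability threshold of $e^{-m\psi}$ along the pole set of $\psi$ jumps from $\mathcal{I}(m_{p-1}\psi)$ to $\mathcal{I}(m_p\psi)$ precisely at $m = m_p$, forces any $\xi$ annihilating $A^2_{t,q} \cap \mathcal{I}(m_p\psi)$ to see only a quotient whose $L^2$-footprint near the pole set is comparable to $e^{(m_p - m_{p-1})t/2}$. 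I would make this precise by the McNeal--Varolin device of replacing the limiting argument by a single explicit extension to a shrinking neighborhood of the pole set together with an $L^2$-estimate, here adapted to jets modulo $\mathcal{I}(m_p\psi)$ instead of jets along a smooth submanifold. Once the asymptotic bound is granted, convexity of $\log\|\xi\|^{*2}_{t,q} + (m_p - m_{p-1})t$ in $\operatorname{Re} t$ forces it to be nondecreasing in $t$, so that $e^{-(m_p - m_{p-1})t}\|F_{t,q}\|^2_{t,q}$ is nonincreasing.

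The conclusion is then obtained by the same two-step limit as in Section 2: since $\Phi_{0,q} = \phi + m_{p-1}\psi$ and $F_{t,q}$ is $L^2$-minimal, the monotonicity yields
$$\|F_0\|^2_{A^2(\Omega,\phi + m_{p-1}\psi)} \leq e^{-(m_p - m_{p-1})t}\|F_{t,q}\|^2_{t,q} \leq e^{-(m_p - m_{p-1})t}\|F^\circ\|^2_{t,q}.$$
Letting $q \to \infty$ with $t$ fixed, Lebesgue's dominated convergence gives $e^{-\Phi_{t,q}} \downarrow \mathbf{1}_{\{\psi \leq t\}}\,e^{-\phi - m_{p-1}\psi}$, so the right-hand side converges to $e^{-(m_p - m_{p-1})t}\int_{\{\psi < t\}}|F^\circ|^2 e^{-\phi - m_{p-1}\psi}$. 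Taking $\limsup$ as $t \to -\infty$ and using an exhaustion $\Omega_j \Subset \Omega$ to extract a weakly convergent subsequence of minimal extensions $F_0^{(j)}$ produces a holomorphic function in $A^2(\Omega, \phi + m_{p-1}\psi)$ mapping to $f$ with the stated norm estimate; by $L^2$-minimality it coincides with $F_0$.
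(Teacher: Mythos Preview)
Your overall architecture matches the paper's: introduce the family $\Phi_{t,q}$, use Berndtsson's variation theorem (in its singular-weight form) to get convexity of $t\mapsto\log\|\xi\|^*_{t,q}$, establish the $O(1)$ bound to deduce monotonicity of $e^{-(m_p-m_{p-1})t}\|F_{t,q}\|^2_{t,q}$, then bound $\|F_{t,q}\|$ by $\|F^\circ\|$ and pass to the limits $q\to\infty$, $t\to-\infty$. The concluding paragraph is exactly the paper's argument.

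The gap is in the step you yourself flag as ``the main obstacle'', the asymptotic bound $\|\xi\|^{*2}_{t,q}\,e^{(m_p-m_{p-1})t} = O(1)$. Your proposed route---the McNeal--Varolin device of replacing a limiting argument by a single extension with an $L^2$-estimate---is not what is needed here; that device is precisely what you already (correctly) deploy in the final paragraph when you replace $F_{t,q}$ by $F^\circ$. The dual-norm bound requires something different: an explicit construction of a dense family of functionals $\xi_g$ and a direct estimate of their norms. The paper does this via \emph{principalization}: one applies Hironaka's theorem to obtain a modification $\mu:X\to\Omega$ on which $\mu^*\psi$ has simple normal crossing singularities, so that locally $\mu^*\psi = c\log|w^a|^2 + \text{smooth}$ and the multiplier ideals $\mathcal{I}(m\psi)$ become monomial divisibility conditions $w^{s^{(p)}}\mid \mu^*h$. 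The functionals $\xi_g$ are then defined, for $g\in C^\infty_c$ supported on a smooth stratum $\{w_k=0\}$ of the exceptional divisor along which a jump occurs at $m_p$, by integrating $\mu^*h$ against $g(w')\,w_k^{s_k^{(p)}-1}$ with weight $e^{-\phi-m_p\psi}$ over a shell $\{t<\psi<t+1\}$ and letting $t\to-\infty$. Both the existence of this limit and the $O(1)$ asymptotic then reduce to elementary one-variable integrals in $w_k$ that can be computed by hand, because after resolution the exponent of $|w_k|$ is exactly $-2$. Without the resolution, the jumping-number heuristic you describe does not by itself produce a dense family of test functionals with the required asymptotics, and your phrase ``pairings against smooth representatives of sections of $\mathcal{I}(m_{p-1}\psi)/\mathcal{I}(m_p\psi)$'' hides exactly this difficulty.
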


\begin{remark}
	The assumption that $\psi$ has neat analytic singularities may be weakened. For example, let $G$ be the pluricomplex Green function of the polydisc $\Delta^n \subset \mathbb{C}^n$ with a pole at the origin. Then we have $G = \max (\log |z_1|^2, \ldots, \log |z_n|^2)$ and it does not seem to have neat analytic singularities, but we can apply Theorem \ref{thm:main} also to this case. Furthermore, by using the Azukawa indicatrix, we can obtain a sharper estimate than Theorem \ref{thm:main} (see \cite{Hos-Azukawa}). 
\end{remark}

\subsection{Principalization of analytic singularities}
In this subsection, we will explain a technique to analyze multiplier ideals of plurisubharmonic functions with analytic singularities. We will follow the exposition in \cite{Dem}.

Let $\psi$ be a plurisubharmonic function on $\Omega$ with neat analytic singularities. As before, we assume that locally
$$\psi(z) = c \log(|g_1(z)|^2 + \cdots + |g_N(z)|^2) + u(z)$$ 
for some $c>0$, $g_j \in \mathcal{O}(U)$, and $u \in C^\infty(U)$.
By Hironaka's theorem, there exists a modification $\mu: X \to\Omega $ such that the ideal on $X$ generated by functions $g_1 \circ \mu, \ldots, g_N \circ \mu$ is equal to $\mathcal{O}_X(-\Delta)$, where $\Delta$ is a simple normal crossing divisor on $X$. Moreover, we can assume that the zero-divisor of the Jacobian of $\mu$ is contained in $\Delta$.
Taking a suitable coordinate, we can write $(g_j \circ \mu) = (w^a)$ locally for some multi-index $a$.
Then $f \in \mathcal{I}(m \psi)$ if and only if
$$\int_{\mu^{-1}(V) }  \frac{|f \circ \mu(w)|^2|{\rm Jac} (\mu)|^2}{|g \circ \mu(w)|^{2mc}}d\lambda(w) < +\infty$$
in every local coordinate $w$. If we write ${\rm Jac}(\mu) = \beta(w) \cdot w^b$ for non-vanishing holomorphic function $\beta$, this condition can be rewritten as
$$\int_{\mu^{-1}(V) }  \frac{|f \circ \mu(w)|^2|w^b|^2}{|w^a|^{2mc}}d\lambda(w) < +\infty.$$
It is equivalent to the condition
$$f \circ \mu \text{ can be divided by } w^s, \text{ where }s_k = \lfloor m c a_k - b_k\rfloor_+.  $$
Thus, $m>0$ is a jumping number of $\psi$ if and only if
$$m = \frac{b_k+M}{ca_k}$$
for some $k$ and $M \in \mathbb{N}$. For the $p$-th jumping number $m_p$, we will write $s_k^{(p)} := \lfloor m_p c a_k - b_k\rfloor_+$. This means that, for $f \in \mathcal{I}(m_p\psi)$, $\mu^*f $ must have zeros of order $s_k^{(p)}$ along $w_k = 0$.

\subsection{Functionals on the space of $L^2$-holomorphic functions}
In this subsection, we will explain the definition of a functional $\xi_g$ on $A^2(\Omega, \phi + m_{p-1}\psi)$ which will be used in the proof of Theorem \ref{thm:non-reduced}.

We consider the modification $\mu: X \to \Omega$ described in the previous subsection. Fix a local coordinate $w$ on $X$ such that $\Delta = \{w^a=0\}$ for a multi-index $a$.
Let $w_k$ be one of the coordinate functions. We assume that, at $m=m_p$, the jump of ideals $\mathcal{I}(m\psi)$ occurs along $w_k=0$, i.e.\ $m_p = \frac{b_k+M}{ca_k}$. We will write the coordinate as $w = (w', w_k)$.

Fix a smooth function $g \in C^\infty_c((\Delta_{reg})_k)$, where $(\Delta_{reg})_k$ means the intersection of the set $\{w_k = 0 \}$ and the regular locus of $\Delta$. We assume that the support of $g$ is contained in an open set where the coordinate $w$ is defined.

For each $h \in A^2(\Omega, \phi + m_{p-1}\psi)$, we want to define $\xi_g$ by
\begin{equation}
\xi_g (h) := \lim_{t \to -\infty} \int_{t <\psi <t+1}\mu^*h(w) \overline{\widetilde{g}(w)} e^{-\phi-m_p \psi} \mu^* d\lambda_\Omega,\label{eqn:def-xi_g}
\end{equation}
where $\widetilde{g}(w) = g(w')w_k^{s_k^{(p)}-1}$. Note that $\mu^* h(w)$ has zeros of order $s_k^{(p-1)}$ along $w_k=0$. By definition of $s_k^{(p-1)}$, we have that
$$s_k^{(p-1)} \leq  m_{p-1}c a_k-b_k < s_k^{(p-1)} +1.$$
Since we assumed that jump occurs at $m_p$ along $w_k=0$, we also have
$$s_k^{(p)} = m_pca_k-b_k =  s_k^{(p-1)} + 1. $$

Properties of $\xi_g$ are summarized in the following proposition:
\begin{proposition}\label{prop:properties_of_xi_g}
\begin{itemize}
	\item[(1)] The limit (\ref{eqn:def-xi_g}) exists and $\xi_g$ is a bounded linear functional on $A^2(\Omega, \phi + m_{p-1}\psi)$.
	\item[(2)] The set of finite sums of functionals $\xi_g$ is a dense subspace of the dual space of $$\frac{A^2(\Omega, \phi+m_{p-1}\psi)}{A^2(\Omega, \phi + m_{p-1} \psi) \cap \mathcal{I}(m_p \psi)}.$$
	\item[(3)] Let $\phi_{s,q}(z) := \phi(z) + q \cdot\max(\psi(z) - s, 0)$ for $s \leq 0$ and $q>0$. Then $e^{(m_p-m_{p-1})s}\|\xi_g\|^2_{A^2(\Omega, \phi_{s,q}+m_{p-1}\psi)^*} = O(1)$ as $s \to -\infty$.
\end{itemize}
\end{proposition}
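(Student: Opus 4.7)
The plan is to fix a local chart on $X$ coming from the principalization $\mu:X\to\Omega$ and read off all three assertions from a single explicit computation of the integrand in that chart. At a regular point of $\Delta$ sitting over the support of $g$, we have $\psi = 2ca_k\log|w_k| + u(w)$ with $u\in C^\infty$, $\mu^*d\lambda_\Omega = |\beta|^2|w_k|^{2b_k}\,dV$ with $\beta$ nowhere vanishing, and $\mu^*h = w_k^{s_k^{(p-1)}}\tilde h$ with $\tilde h$ holomorphic for every $h\in A^2(\Omega,\phi+m_{p-1}\psi)$. Using the identities $s_k^{(p)} = s_k^{(p-1)}+1 = m_pca_k-b_k$ at the jumping number $m_p$, the powers of $w_k$ contributed by $\mu^*h$, $\overline{\widetilde g}$, $e^{-m_p\psi}$, and $\mu^*d\lambda_\Omega$ cancel exactly so that the integrand of (\ref{eqn:def-xi_g}) collapses to $\tilde h(w)\,\overline{g(w')}\,e^{-\phi-m_pu}|\beta|^2\cdot|w_k|^{-2}\,dV(w)$.

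For parts (1) and (2) I switch to polar coordinates $w_k = re^{i\theta}$. The shell $\{t<\psi<t+1\}$ is (over the chart) an annulus whose ratio $r_2/r_1$ is independent of $t$, so $\int dr/r$ over it equals $1/(2ca_k)$, while the $d\theta$-integration kills every Taylor coefficient of $\tilde h$ in $w_k$ except $\tilde h(w',0)$. Hence the limit in (\ref{eqn:def-xi_g}) exists and is proportional to a fixed integral on $\{w_k=0\}$ of $\tilde h(w',0)\,\overline{g(w')}$ against smooth bounded factors; boundedness of $\xi_g$ then follows from the trace estimate used below in (3). The same formula shows $\xi_g\equiv 0$ on $\mathcal{I}(m_p\psi)$ (one extra factor of $w_k$ in $\mu^*h$ forces $\tilde h(w',0)\equiv 0$), so $\xi_g$ descends to the quotient; conversely, if $h$ is annihilated by every $\xi_g$ attached to every component of $\Delta$ at which the jump $m_{p-1}\to m_p$ occurs, then $\tilde h(\,\cdot\,,0)\equiv 0$ on each such component by the fundamental lemma of the calculus of variations, so $\mu^*h$ has vanishing order $s_k^{(p)}$ along every jumping component and hence $h\in\mathcal{I}(m_p\psi)$, giving (2).

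For (3), the analogous local computation expresses the integrand of $\|h\|^2_{A^2(\Omega,\phi+m_{p-1}\psi)}$ as $|\tilde h|^2|w_k|^{2\eta}\cdot e^{-\phi-m_{p-1}u}|\beta|^2\,dV$, where $\eta := s_k^{(p-1)}+b_k - m_{p-1}ca_k \in (-1,0]$. The sublevel set $\{\psi<s\}$ is locally $\{|w_k|<r_s\}$ with $r_s\asymp e^{s/(2ca_k)}$, and on this set $\phi_{s,q}=\phi$, so this integral restricted to $\{|w_k|<r_s\}$ is dominated by $\|h\|^2_{A^2(\Omega,\phi_{s,q}+m_{p-1}\psi)}$. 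The mean-value inequality for $w_k\mapsto \tilde h(w',w_k)$ on $|w_k|<r_s$, combined with $|w_k|^{2\eta}\geq r_s^{2\eta}$ (valid since $\eta\leq 0$), yields
\begin{equation*}
|\tilde h(w',0)|^2 \;\leq\; \frac{C}{r_s^{2+2\eta}}\int_{|w_k|<r_s}|\tilde h(w',w_k)|^2|w_k|^{2\eta}\,dV(w_k).
\end{equation*}
The crucial numerical identity $2+2\eta = 2ca_k(m_p-m_{p-1})$ then gives $r_s^{-(2+2\eta)} = O(e^{-(m_p-m_{p-1})s})$; inserting into the formula for $\xi_g(h)$ from (1) and applying Cauchy--Schwarz against $|g|^2$ produces the bound claimed in (3).

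The main point to keep track of is the arithmetic of exponents: the single identity $s_k^{(p)} = m_pca_k-b_k$ is responsible both for the $|w_k|^{-2}$ cancellation in the shell integrand and, via $1+\eta = ca_k(m_p-m_{p-1})$, for the exact exponent $(m_p-m_{p-1})$ appearing in (3). Verifying that these cancellations remain consistent across the finitely many local charts covering the jumping components of $\Delta_{\mathrm{reg}}$, and confirming that the estimate in (3) is uniform in $q$ (which only enlarges the ambient Hilbert space on $\{\psi>s\}$), is where the most careful bookkeeping will be required.
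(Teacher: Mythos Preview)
Your proposal is correct and follows essentially the same route as the paper: pass to a local chart on the principalization, use the exponent identity $s_k^{(p)} = m_p c a_k - b_k$ to collapse the shell integrand to $|w_k|^{-2}$ times a continuous factor, and read off (1)--(3) from there. The only organizational difference is in (3): the paper applies Cauchy--Schwarz directly to the defining shell integral (splitting off a $\widetilde g$-factor bounded by a constant $C_g$) and then compares the remaining shell integral of $|\mu^*h|^2e^{-\phi-m_p\psi}$ to the disc integral $\int_{\psi<s}|\mu^*h|^2e^{-\phi-m_{p-1}\psi}$, whereas you first pass to the limit using the explicit formula from (1) and then bound $|\tilde h(w',0)|^2$ by a sub-mean-value estimate over the disc $|w_k|<r_s$; both arguments rest on the same identity $1+\eta = ca_k(m_p-m_{p-1})$, so this is a reordering rather than a different method.

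Two small imprecisions in (1) to tighten: the ratio $r_2/r_1$ of the annulus is not literally independent of $t$ but only tends to $e^{1/(2ca_k)}$ as $t\to-\infty$ (because $u$ depends on $w_k$), and the phrase ``$d\theta$-integration kills every Taylor coefficient'' is misleading since the integrand also carries the non-holomorphic factor $e^{-\phi-m_pu}|\beta|^2$; the clean reason the limit exists is simply that $\tilde h(w)L(w)\to \tilde h(w',0)L(w',0)$ uniformly on the shrinking annulus while $\int_{r_1}^{r_2}dr/r$ stays bounded.
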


\begin{proof}
(1) We will prove the existence of the limit by computing the limit in more explicit terms. What we have to compute is
$$ \lim_{t \to -\infty} \int_{t <\psi <t+1}\mu^*h(w) \overline{\left(g(w')w_k^{s_k^{(p)}-1}\right)} e^{-\phi-m_p \psi} \mu^* d\lambda_\Omega.$$
We can write as $\mu^* h(w) = k(w) \cdot w_k^{s_k^{(p)} - 1}$, where $k(w)$ is a holomorphic function. We can also write as
$$\sum_{j=1}^N|g_j \circ \mu(w)|^2 = \gamma(w)\cdot |w^a|^2,$$
where $\gamma(w)$ is a positive continuous function (this is a square sum of the form $\sum_j \left| g_j \circ \mu(w) / w^a\right|^2$). 
Then,
$$\mu^* \psi = \mu^*(c \log|g|^2 + u) = c \log(\gamma \cdot |w^a|^2) + \mu^* u.$$
In addition, we can write as $\mu^* d\lambda_\Omega = |{\rm Jac}\,\mu|^2 d\lambda(w)= |\beta(w)|^2 |w^b|^2 d\lambda(w)$, where $\beta(w)$ is a non-vanishing holomorphic function. Using these, the integral can be written as
\begin{align*}
&\int_{\substack{w' \in {\rm supp}(g) \\ t < \psi(w',w_k) < t+1}}  k(w) \overline{g(w')} |w_k|^{2(s_k^{(p)}-1)} e^{-\mu^*\phi} \gamma(w)^{-m_pc} |w^a|^{-2m_pc} e^{-m_p \mu^* u(w)} |\beta(w)|^2 |w^b|^2 d\lambda(w)\\
&=\int_{\substack{w' \in {\rm supp}(g) \\ t < \psi(w',w_k) < t+1}}  k(w) \overline{g(w')}  |w_k|^{2(s_k^{(p)}-1)}  |w^{-(m_pca-b)}|^2  e^{-\mu^*\phi} \gamma(w)^{-m_pc}e^{-m_p \mu^* u(w)} |\beta(w)|^2  d\lambda(w).
\end{align*}
The degree of $w_k$ is
$$(s_k^{(p)} -1) - (m_p ca_k - b_k) = -1, $$
thus the integral is like
$$=\int_{\substack{w' \in {\rm supp}(g) \\ t < \psi(w',w_k) < t+1}}  k(w) \overline{g(w')}  |w_k|^{-2} L(w)d\lambda(w),$$
where $L(w)$ is a continuous function.
Consider integration in $w_k$, using Fubini's theorem. Then the domain of integration is 
\begin{equation}
t < c \log\left(\gamma(w) |w^a|^2 \right) + \mu^* u(w) < t+1.\label{eqn:interval}
\end{equation}
Writing as $w^a = (w')^{a'} w_k^{a_k}$, we have that
$$t - c\log (\gamma(w)|(w')^{a'}|^2) - \mu^*u(w) < ca_k \log|w_k|^2 < t - c\log (\gamma(w)|(w')^{a'}|^2) - \mu^*u(w) + 1.$$
Thus we can write (\ref{eqn:interval}) as
$$t - \diamondsuit (w')-\epsilon(w) < ca_k \log |w_k|^2 < t - \diamondsuit(w') + 1 + \epsilon'(w),$$
where $\diamondsuit(w')$ is a continuous function and $\epsilon, \epsilon'$ is a small error term converging to $0$ when $t \to -\infty.$
Then, by a straightforward computation, one can obtain that the limit exists and is
$$\frac{\pi}{ca_k} \int_{w' \in {\rm supp}(g)} k(w',0) \overline{g(w')} L(w',0) d\lambda(w').$$
This shows the linearity of $\xi_g$. Boundedness of $\xi_g$ can be proved in the same way as in the proof of (3), so this part is postponed.

(2) What we need to prove is that, for each $h \in A^2(\Omega, \phi+m_{p-1}\psi)$, if $\xi_g(h) = 0$ for every $g$ then $h \in H^0(\Omega, \mathcal{I}(m_p)\psi)$. By the calculation in (1), $\xi_g(h) = 0$ for every $g$ implies that $h$ has zeros of order $s_k^{(p)}$ along $\{w_k=0\}$ for every $w_k$. This implies that $h \in \mathcal{I}(m_p \psi)$.

(3) The norm of $\xi$ is
\begin{align*}
\|\xi\|^2_{A^2(\Omega, \phi_{s,q} + m_{p-1}\psi)^*} &= \sup_h \frac{|\langle \xi_g, h \rangle |^2}{\|h\|^2_{A^2(\Omega, \phi_{s,q} + m_{p-1}\psi)}}\\
& = \sup_h \frac{|\langle \xi_g, h \rangle |^2}{\int_\Omega |h|^2 e^{-\phi_{s,q} - m_{p-1} \psi}}.
\end{align*}
By Cauchy-Schwarz inequality, the numerator can be estimated as
\begin{align*}
&\lim_{t \to -\infty} \left(\left|\int_{\substack{w' \in {\rm supp}(g) \\ t < \psi(w',w_k) < t+1}} \mu^* h \cdot \overline{\widetilde{g}} \cdot e^{-\phi - m_p \psi} \mu^* d\lambda(w)\right|\right)^2\\
&\leq \lim_{t \to -\infty} \left(\int_{\substack{w' \in {\rm supp}(g) \\ t < \psi(w',w_k) < t+1}}  |\mu^* h|^2 e^{-\phi - m_p \psi} \mu^* d\lambda \right) \cdot \left(\int_{\substack{w' \in {\rm supp}(g) \\ t < \psi(w',w_k) < t+1}}  |\widetilde{g}|^2 e^{-\phi - m_p \psi} \mu^* d\lambda \right).
\end{align*}
The second integral is independent of $h$ and can be bounded by a constant, say $C_g$.
We want to prove the following estimate:
\begin{equation}
\lim_{t \to -\infty}\int_{\substack{w' \in {\rm supp}(g) \\ t < \psi(w',w_k) < t+1}}  |\mu^* h|^2 e^{-\phi - m_p \psi} \mu^* d\lambda \leq Ce^{-(m_p-m_{p-1})s}\int_{\substack{w' \in U \\ \psi(w',w_k) < s}} |\mu^* h|^2 e^{-\phi - m_{p-1}\psi}  \mu^* d\lambda,\label{eqn:limit_estimate}
\end{equation}
where $U \subset \Delta_k$ is a neighborhood of ${\rm supp}(g)$. Once it is proved, the right-hand side can be bounded as
\begin{align*}
(RHS) &\leq C e^{-(m_p-m_{p-1})s} \int_{\psi< s} |h|^2 e^{-\phi - m_{p-1}\psi}d\lambda(z)\\
&\leq C e^{-(m_p-m_{p-1})s} \int_\Omega |h|^2 e^{-\phi_{s,q} - m_{p-1}\psi}.
\end{align*}
This gives the desired conclusion. 

Let us prove (\ref{eqn:limit_estimate}). 
Using $\mu^* h = w_k^{s_k^{(p)}} k(w)$, $\mu^*\psi = c(\log (\gamma(w) \cdot |w^\alpha|^2)) + \mu^*u(w)$, and $\mu^* \lambda = \beta(w) w^b d \lambda(w)$, each integration can be rewritten as:
$$\int_{t<\psi < t+1} |w_k|^{2(s_k^{(p)}-1)}  |k|^2 e^{-2\phi} \gamma(w)^{-m_pc} |w^a|^{-2m_pc} e^{-m_p \mu^* u} |\beta|^2 |w^b|^2 d\lambda(w),\text{ and}$$
$$\int_{\psi < s} |w_k|^{2(s_k^{(p)}-1)}  |k|^2 e^{-2\phi} \gamma(w)^{-m_{p-1}c} |w^a|^{-2m_{p-1}c} e^{-m_{p-1} \mu^*a u} |\beta|^2 |w^b|^2 d\lambda(w).$$
Fix $w'$ and consider integrations in $w_k$. Then essentially it is enough to prove that
\begin{align*}
&\lim_{t \to -\infty}\int_{\frac{t-\diamondsuit}{ca_k} < \log |w_k|^2 < \frac{t-\diamondsuit+1}{ca_k}} |w_k|^{2(s_k^{(p)}-1-m_pca_k+b_k)} d\lambda(w_k) \\
&\hspace{2cm}\leq C(e^{-(m_p-m_{p-1}s')})
\int_{\log |w_k|^2 < \frac{s-\diamondsuit}{ca_k}} |w_k|^{2(s_k^{(p)}-1-m_{p-1}ca_k+b_k)} d\lambda(w_k).
\end{align*}
Since $s_k^{(p)}-m_pca_k+b_k = 0$, it is equivalent to
\begin{align*}
&\lim_{t \to -\infty}\int_{\frac{t-\diamondsuit}{ca_k} < \log |w_k|^2 < \frac{t-\diamondsuit+1}{ca_k}} |w_k|^{-2} d\lambda(w_k) \\
& \hspace{2cm}\leq C(e^{-(m_p-m_{p-1}s')})
\int_{\log |w_k|^2 < \frac{s-\diamondsuit}{ca_k}} |w_k|^{2((m_p-m_{p-1})ca_k-1 )} d\lambda(w_k).
\end{align*}
This can be proved by a simple computation.
\end{proof}

\subsection{Proof of Theorem \ref{thm:non-reduced}}
As in the proof of Theorem \ref{thm:main}, the minimum $L^2$-norm of $L^2$-extension is equal to the quotient norm of $F^\circ$ in the space $\displaystyle\frac{A^2(\Omega, \phi + m_{p-1}\psi)}{A^2(\Omega, \phi + m_{p-1}\psi) \cap \mathcal{I}(m_{p}\psi)}$. By considering the dual, this equals to
$$\sup_g \frac{|\langle \xi_g, F^\circ\rangle|}{\|\xi_g\|_{A^2(\Omega, \phi+m_{p-1}\psi)^*}}, $$
where $\xi_g$ is the functional defined in the previous subsection.

For $s<0$ and $q > 0$, let $\phi_{s,q}(z) := \phi(z) + q \max(G(z)-s, 0)$.
Denote by $F_{s,q}$ the $L^2$-minimal extension of $f$ in the space $A^2(\Omega, \phi_{s,q} + m_{p-1}\psi)$.
Then, by the same reason, 
$$\|F_{s,q}\|_{A^2(\Omega, \phi_{s,q} + m_{p-1}\psi)} = \sup_g \frac{|\langle \xi_g, F^\circ\rangle|}{\|\xi_g\|_{A^2(\Omega, \phi_{s,q}+m_{p-1}\psi)^*}}.$$
By a singular version of Theorem \ref{thm:variation}, we have that $\log \|\xi_g\|_{A^2(\Omega, \phi_{s,q}+m_{p-1}\psi)^*}$ is convex in $s$. (Here we have to consider singular weights in Theorem \ref{thm:variation}. In this situation, we should prove Theorem \ref{thm:variation} in advance by the optimal $L^2$-extension theorem (Theorem \ref{thm:main}) as in \cite{GZ}. Also see \cite{Hos-jet}.)
By Proposition \ref{prop:properties_of_xi_g} (3), we also have that ${(m_p-m_{p-1})s} + \log \|\xi_g\|^2_{A^2(\Omega, \phi_{s,q}+m_{p-1}\psi)^*}$ is bounded from above when $s \to -\infty$, and thus increasing in $s$.
Therefore, $e^{-(m_p-m_{p-1})s} \|F_{s,q}\|^2_{A^2(\Omega, \phi_{s,q} + m_{p-1}\psi)}$ is decreasing in $s$. Then we have the following estimates:
\begin{align*}
\|F_0\|^2_{A^2(\Omega, \phi + m_{p-1}\psi)}  \leq&\, e^{-(m_p-m_{p-1})s} \|F_{s,q}\|^2_{A^2(\Omega, \phi_{s,q} + m_{p-1}\psi)} \\
 \leq&\, e^{-(m_p-m_{p-1})s} \|F^\circ\|^2_{A^2(\Omega, \phi_{s,q} + m_{p-1}\psi)} \\
 \overset{q \to +\infty}{\longrightarrow} &\, e^{-(m_p-m_{p-1})s} \|F^\circ\|^2_{A^2(\Omega_s, \phi + m_{p-1}\psi)}
\end{align*}
This completes the proof.

{\bf Acknowledgment. } This work is supported by the Grant-in-Aid for JSPS Fellows $\sharp$19J00473.

	%\vskip3mm
	%{\bf Acknowledgment. } The author would like to thank Prof.\ Shigeharu Takayama and Prof.\ Takeo Ohsawa for valuable comments.
	
	\bibliographystyle{plain}

\end{document}